\newtheorem{thm}{Theorem}[section] 
\newtheorem{prop}[thm]{Proposition}
\newtheorem{cor}[thm]{Corollary}
\newtheorem{rem}[thm]{Remark}
\theoremstyle{definition}
\newtheorem{example}[thm]{Example}
\newcommand{\C}{\mathbb{C}}
\newcommand{\R}{\mathbb{R}}
\newcommand{\N}{\mathbb{N}}
\newcommand{\g}{\frak{g}}
\numberwithin{equation}{section}
\begin{document} 

\title[On splittings of Joint deformations]{On splittings of  deformations of pairs of  complex structures and holomorphic vector bundles}

\author[H. Kasuya]{Hisashi Kasuya}
\address{Graduate School of Mathematics,
Nagoya University}
\email{kasuya@math.nagoya-u.ac.jp}
\author[V. Purho]{Valto Purho}
\address{Department of Mathematics, Graduate School of Science, Osaka University, Osaka,
Japan}
\email{valto.purho@pm.me}

\subjclass[2010]{32G05, 22E25,  32L10, 	57T15}

\keywords{Deformation. Kuranishi space,  Holomorphic Vector bundle, nilmanifolds}

\begin{abstract}
We can show that the Kuranishi space of  a pair $(M,E)$ of a compact K\"ahler manifold $M$ and its flat  Hermitian vector bundle $E$ is isomorphic to the direct product of the Kuranishi space  of $M$ and  the Kuranishi space  of $E$.
We study  non-K\"ahler case.
We show that the Kuranishi space of  a pair $(M,E)$ of a complex parallelizable nilmanifold $M$ and its trivial holomorphic vector bundle $E$ is isomorphic to the direct product of the Kuranishi space  of $M$ and  the Kuranishi space  of $E$.
We  give examples of pairs $(M,E)$ of nilmanifolds $M$ with left-invariant abelian complex structures and their trivial holomorphic line bundles $E$ such that  the Kuranishi spaces of   pairs $(M,E)$ are not isomorphic to direct products of the Kuranishi spaces   of $M$ and  the Kuranishi spaces  of $E$.

\end{abstract}

\maketitle
\section{Introduction}
Let $M$ be a compact complex manifold.
By Kuranishi \cite{Ku},  we can describe the parameter space $Kur_{M}$ of all sufficiently small deformations of complex structures on $M$ as an analytic germ of an analytic space.
$Kur_{M}$ is  called the Kuranishi space of $M$.
By the analogue of  Kuranishi's  construction for deformations of holomorphic vector bundles over $M$ with a fixed complex structure, we can describe the  parameter space $Kur_{E}$ of  deformations of  all sufficiently small  holomorphic  structures on a holomorphic vector bundle $E$ (\cite{GM, GM2, Ko}).
$Kur_{E}$ is  also called 
the Kuranishi space of $E$.

In \cite{Hu}, Huang studies  deformations of a pair  $(M,E)$ of a complex manifold  $M$ and a holomorphic vector bundle  $E$ over $M$.
Extending  Kuranishi's construction  to a pair $(M,E)$, we obtain the Kuranishi space $Kur_{(M, E)}$ of a pair $(M,E)$.

The singularity of the Kuranishi space is considered  as obstructions of deformations.
We are interested in comparing  the singularity of $Kur_{(M, E)}$ with the singularity of   $Kur_{M}$ and  $Kur_{E}$.

\begin{prop}
Let $M$ be a compact K\"ahler manifold and $E$ be a flat Hermitian vector bundle.
Then we have
\[Kur_{(M,E)}\cong Kur_{M}\times Kur_{E}
\]
\end{prop}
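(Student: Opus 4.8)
The plan is to realise $Kur_{(M,E)}$ as the Kuranishi space of an explicit differential graded Lie algebra (DGLA) and to show that this DGLA is, up to bracket-preserving quasi-isomorphism, the direct \emph{product} of the DGLAs governing $Kur_M$ and $Kur_E$. Following Huang, deformations of the pair $(M,E)$ are controlled by
\[
L^{\bullet}=A^{0,\bullet}\bigl(M,\mathcal{D}^{1}(E)\bigr),
\]
the Dolbeault complex of the Atiyah sheaf $\mathcal{D}^{1}(E)$ of first-order differential operators on $E$ with scalar principal symbol, with $\bar\partial$-differential and the bracket induced by the commutator of differential operators. The principal-symbol sequence $0\to\mathrm{End}(E)\to\mathcal{D}^{1}(E)\to T^{1,0}M\to 0$ resolves to a short exact sequence of DGLAs
\[
0\longrightarrow L^{\bullet}_{E}\longrightarrow L^{\bullet}\stackrel{\pi}{\longrightarrow}L^{\bullet}_{M}\longrightarrow 0,
\]
where $L^{\bullet}_{E}=A^{0,\bullet}(M,\mathrm{End}(E))$ is an ideal governing $Kur_{E}$ and $L^{\bullet}_{M}=A^{0,\bullet}(M,T^{1,0}M)$ governs $Kur_{M}$. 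Now the hypothesis that $E$ is flat Hermitian enters: its Chern connection $\nabla$ is flat, so $\nabla^{1,0}$ is a flat holomorphic connection on $E$, that is, a splitting of the Atiyah sequence as sheaves of Lie algebras. Resolving, this yields a DGLA section $\sigma\colon L^{\bullet}_{M}\to L^{\bullet}$ of $\pi$, so that $L^{\bullet}\cong L^{\bullet}_{E}\rtimes_{\sigma}L^{\bullet}_{M}$, with underlying complex the direct sum $L^{\bullet}_{E}\oplus L^{\bullet}_{M}$ (differential $\bar\partial_{E}\oplus\bar\partial$), the only non-product feature being the cross bracket $[\sigma(\varphi),\beta]=\varphi\lrcorner\nabla^{1,0}\beta$. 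In particular $H^{\bullet}(L^{\bullet})=H^{\bullet}(M,\mathrm{End}(E))\oplus H^{\bullet}(M,T^{1,0}M)$ as graded vector spaces.

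Since the Kuranishi space depends on the controlling DGLA only through its quasi-isomorphism type, it now suffices to produce a chain of DGLA quasi-isomorphisms between $L^{\bullet}$ and the direct product $L^{\bullet}_{M}\times L^{\bullet}_{E}$. For $L^{\bullet}_{M}$ one invokes the theorem of Goldman and Millson that the deformation theory of a compact Kähler manifold is controlled by its cohomology (equivalently, $Kur_M$ is a quadratic cone). For $L^{\bullet}_{E}$ the same conclusion follows from the $\partial\bar\partial$-lemma for $A^{\bullet,\bullet}(M,\mathrm{End}(E))$, with $\nabla^{1,0}$ in the role of $\partial$: this holds because $\mathrm{End}(E)$ is a unitary flat bundle, so the Kähler identities are valid with such coefficients (e.g. $\bar\partial_E^{*}=-i[\Lambda,\nabla^{1,0}]$) and one has $(\nabla^{1,0})^{2}=0$, $\nabla^{1,0}\bar\partial_E+\bar\partial_E\nabla^{1,0}=0$.

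To promote this to $L^{\bullet}$, fix the Kähler metric on $M$ and the flat Hermitian metric on $E$ and use the induced Hodge theory. The harmonic projection and the Green operator respect the splitting $L^{\bullet}_{E}\oplus L^{\bullet}_{M}$, so it remains only to control the cross bracket on cohomology. A harmonic $T^{1,0}M$-valued form $\varphi$ and a harmonic $\mathrm{End}(E)$-valued form $\beta$ are simultaneously closed for $\bar\partial$, $\bar\partial_{E}$ and $\nabla^{1,0}$ (using flatness and the Kähler identity above); combining these facts one shows $\varphi\lrcorner\nabla^{1,0}\beta$ is $\bar\partial_{E}$-exact, hence represents $0$ in $H^{\bullet}(M,\mathrm{End}(E))$ by the $\partial\bar\partial$-lemma. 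Thus the cross bracket vanishes on $H^{\bullet}(L^{\bullet})$, which is therefore the \emph{product} graded Lie algebra $H^{\bullet}(M,T^{1,0}M)\times H^{\bullet}(M,\mathrm{End}(E))$; moreover the canonical $\partial\bar\partial$-formality quasi-isomorphisms of the two factors are compatible with the $\nabla^{1,0}$-twisting and assemble into a DGLA quasi-isomorphism $L^{\bullet}\simeq L^{\bullet}_{M}\times L^{\bullet}_{E}$. Passing to Kuranishi spaces gives $Kur_{(M,E)}\cong Kur_{M}\times Kur_{E}$, both sides being the quadratic cone cut out by the block-diagonal cup bracket.

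The main obstacle is precisely this last step. The Atiyah algebra $\mathcal{D}^{1}(E)$ is neither a flat bundle nor a Hermitian-canonical one, so formality of $L^{\bullet}$ cannot be quoted from a $\partial\bar\partial$-lemma directly and must be extracted from the semidirect-product decomposition. Concretely, the work lies in verifying that the cross bracket $\varphi\lrcorner\nabla^{1,0}\beta$ is homotopically trivial — i.e. that the Hodge-theoretic formality maps of the two factors intertwine the $\nabla^{1,0}$-action — which is exactly the point at which one must use together the Kähler identities on $M$ and both the flatness and the unitarity of the metric on $E$. This is also the mechanism that breaks down in the non-Kähler examples with abelian complex structures constructed later in the paper.
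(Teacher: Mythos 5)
Your setup is essentially Huang's and agrees with the paper's starting point: with $R=0$ the controlling DGLA is a semidirect product of $L^{\bullet}_{M}=A^{0,\bullet}(M,T^{1,0}M)$ and $L^{\bullet}_{E}=A^{0,\bullet}(M,\mathrm{End}(E))$, the only non-product feature being the cross bracket $\iota_{\varphi}\nabla^{1,0}\beta$. The gap lies in how you dispose of that term. You invoke ``the theorem of Goldman and Millson that the deformation theory of a compact K\"ahler manifold is controlled by its cohomology (equivalently, $Kur_{M}$ is a quadratic cone).'' No such theorem exists: the Goldman--Millson formality theorem concerns $A^{\bullet}(M,\mathrm{ad}\,P)$ for a \emph{flat} reductive bundle on a compact K\"ahler manifold --- which is exactly why it applies to $L^{\bullet}_{E}$ here --- but $T^{1,0}M$ is not flat, $L^{\bullet}_{M}$ is not formal in general, and the Kuranishi space of a compact K\"ahler manifold can be obstructed beyond second order, so it need not be a quadratic cone. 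Your closing claim that both sides are ``the quadratic cone cut out by the block-diagonal cup bracket'' is therefore false as stated. Moreover, even granting formality of both factors, checking that the cross bracket vanishes on harmonic representatives (because harmonic $\mathrm{End}(E)$-valued forms are $\nabla^{1,0}$-closed by the K\"ahler identities) only kills the bracket on cohomology; it does not by itself produce a DGLA quasi-isomorphism from the semidirect product to the direct product. You correctly identify this as ``the main obstacle'' but then merely assert that the formality maps ``assemble,'' which is precisely the step requiring proof.

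The paper sidesteps both issues by never modifying $L^{\bullet}_{M}$ at all: it passes to the sub-DGLA $A^{0,\bullet}(M,T^{1,0}M)\oplus\ker\partial_{\mathrm{End}(E)}$, where $\partial_{\mathrm{End}(E)}$ is the $(1,0)$-part of the flat Chern connection. On $\partial$-closed forms the cross bracket vanishes \emph{identically at the cochain level}, since $\iota_{\alpha}D\beta=\iota_{\alpha}\partial_{\mathrm{End}(E)}\beta=0$; so this subspace is literally a direct sum of the two DGLAs, and the $\partial\bar\partial$-lemma for the unitary flat bundle $\mathrm{End}(E)$ --- the only place K\"ahlerness and flatness are used --- shows that the inclusion into $L^{\bullet}(M,E)$ is a quasi-isomorphism. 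Your harmonic-representative observation is the right idea, but it must be implemented as a restriction to a sub-DGLA rather than as a statement about cohomology; with that replacement (and with the unsupported formality claims for $L^{\bullet}_{M}$ deleted) your argument becomes the paper's proof.
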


We study  deformations of pairs of  non-K\"ahler complex manifolds with trivial holomorphic vector bundles.
A nilmanifold is a compact quotient $\Gamma \backslash G$ of a simply connected nilpotent Lie group $G$ by a discrete subgroup $\Gamma$.
It is known that $\Gamma \backslash G$ admits a K\"ahler structure if and only if $G$ is abelian and  $\Gamma \backslash G$ is a torus (\cite{BG, H}).
Thus, if $G$ is non-abelian  and admits a left-invariant complex structure, $\Gamma \backslash G$ is a non-K\"ahler complex manifold.

\begin{thm}\label{par}
Let $M=\Gamma\backslash G$ be a complex parallelizable nillmanifold   and $E$ be a trivial holomorphic  vector bundle.
Then we have
\[Kur_{(M,E)}\cong Kur_{M}\times Kur_{E}
\]
\end{thm}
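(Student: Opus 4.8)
The plan is to pass from the deformation DGLA of the pair to its finite-dimensional subalgebra of left-invariant forms, to observe that on that subalgebra the DGLA is a genuine direct product, and then to invoke the quasi-isomorphism invariance of the Kuranishi space. Recall first that, following Huang, deformations of the pair $(M,E)$ are governed by the DGLA $L^{\bullet}=A^{0,\bullet}(M,\mathrm{At}(E))$, the Dolbeault resolution of the Atiyah algebra sheaf $\mathrm{At}(E)$, which fits into $0\to\mathrm{End}(E)\to\mathrm{At}(E)\xrightarrow{\sigma}T^{1,0}M\to 0$, with differential $\bar\partial$ and bracket induced by the commutator of differential operators. Since $M=\Gamma\backslash G$ is complex parallelizable, $T^{1,0}M$ is the trivial holomorphic bundle with fiber the complex Lie algebra $\g$ of $G$; since $E$ is holomorphically trivial, $\mathrm{End}(E)$ is trivial with fiber $\frak{gl}_{r}$, and the trivial holomorphic connection on $E$ splits the Atiyah sequence, $\mathrm{At}(E)\cong T^{1,0}M\oplus\mathrm{End}(E)$, compatibly with $\bar\partial$. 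All of these identifications are furnished by left-invariant data, so the subcomplex $L^{\bullet}_{\mathrm{inv}}\subset L^{\bullet}$ of left-invariant forms is a sub-DGLA which, as a complex, is the direct sum $A^{0,\bullet}_{\mathrm{inv}}(T^{1,0}M)\oplus A^{0,\bullet}_{\mathrm{inv}}(\mathrm{End}(E))$.

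Next I would verify that $L^{\bullet}_{\mathrm{inv}}\hookrightarrow L^{\bullet}$ is a quasi-isomorphism. Because $M$ is a complex parallelizable nilmanifold, the Nomizu-type theorem for Dolbeault cohomology (due to Sakane in this case) gives $H^{0,\bullet}_{\bar\partial}(M)\cong H^{\bullet}(\bar{\g};\C)$, and tensoring with $\g$, respectively $\frak{gl}_{r}$, shows that $H^{0,\bullet}(M,T^{1,0}M)$ and $H^{0,\bullet}(M,\mathrm{End}(E))$ are computed by left-invariant forms; the five lemma, applied to the map from the long exact cohomology sequence of the left-invariant Atiyah sequence to that of the full one, then yields $H^{\bullet}(L_{\mathrm{inv}})\xrightarrow{\sim}H^{\bullet}(L)$.

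The decisive point is that on $L^{\bullet}_{\mathrm{inv}}$ the bracket is that of a \emph{direct product} of DGLAs. The $T^{1,0}M$-component of any bracket depends only on the $T^{1,0}M$-components (the principal symbol is a Lie algebra homomorphism), so the only term that can mix the two summands is the $\mathrm{End}(E)$-component of the bracket of $(\theta,0)$ with $(0,\Phi)$, i.e.\ the Lie derivative of $\Phi$ along $\theta$ for the trivial connection. Writing $\theta=\bar\eta\otimes X$ and $\Phi=\bar\xi\otimes\phi$ with $\bar\eta,\bar\xi$ left-invariant antiholomorphic forms, $X$ a left-invariant $(1,0)$-vector field and $\phi$ a constant matrix, every term of this bracket involves either the covariant derivative $\nabla^{1,0}_{X}\phi=X(\phi)$, which vanishes because $\phi$ is constant, or the contraction of a $(1,0)$-vector field into $\partial$ of a left-invariant antiholomorphic form. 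But on a complex parallelizable nilmanifold the structure equations force $d\omega^{i}$ to be of pure type $(2,0)$ for every left-invariant holomorphic $1$-form $\omega^{i}$, hence $\partial\bar\omega^{i}=0$, so the second kind of term vanishes as well. (This is exactly where complex parallelizability enters in an essential way, beyond supplying the Nomizu-type isomorphism: for abelian complex structures $d\omega^{i}$ acquires a $(1,1)$-part and the cross term need not vanish, which is why the splitting may fail there.) Hence $L^{\bullet}_{\mathrm{inv}}=A^{0,\bullet}_{\mathrm{inv}}(T^{1,0}M)\times A^{0,\bullet}_{\mathrm{inv}}(\mathrm{End}(E))$ as DGLAs.

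To conclude, carry out the Kuranishi construction on the elliptic complex $L^{\bullet}$ with a left-invariant Hermitian metric, so that the associated Laplacian, Green operator and harmonic projection preserve left-invariant forms; then the recursively defined solution of the Maurer--Cartan equation and the Kuranishi obstruction map stay in $L^{\bullet}_{\mathrm{inv}}$ and, by the product structure, split into the analogous data for the two factors. Therefore $Kur_{(M,E)}\cong Kur(L_{\mathrm{inv}})\cong Kur\big(A^{0,\bullet}_{\mathrm{inv}}(T^{1,0}M)\big)\times Kur\big(A^{0,\bullet}_{\mathrm{inv}}(\mathrm{End}(E))\big)$, the first isomorphism being the quasi-isomorphism invariance of the Kuranishi space (as in Goldman--Millson) together with the previous step. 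Applying the same Nomizu-type quasi-isomorphisms to the two individual deformation problems identifies the first factor with $Kur_{M}$ --- this recovers Nakamura's description of the Kuranishi space of a complex parallelizable nilmanifold --- and the second with $Kur_{E}$, which gives $Kur_{(M,E)}\cong Kur_{M}\times Kur_{E}$. The step I expect to be the main obstacle is the vanishing of the cross term: pinning down the precise bracket of the Atiyah DGLA and seeing that complex parallelizability annihilates it on left-invariant forms; the comparison with left-invariant forms and the product decomposition of the Kuranishi spaces are then essentially formal.
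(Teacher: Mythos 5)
Your proposal is correct and follows essentially the same route as the paper: restrict to the finite-dimensional sub-DGLA of left-invariant forms (a quasi-isomorphic inclusion by the Nomizu/Sakane-type theorem), observe that complex parallelizability forces $\partial\bar\omega=0$ for left-invariant antiholomorphic forms --- the paper phrases this as ``$d=\bar\partial$ on $\bigwedge(\g^{0,1})^{\ast}$'' --- so the cross term in the bracket vanishes and the sub-DGLA is a genuine direct sum, and then conclude by quasi-isomorphism invariance of Kuranishi spaces. Your identification of the vanishing of the cross term as the decisive point, and of its failure for abelian complex structures, matches exactly the mechanism the paper exploits in Theorem 4.2 and the non-splitting examples.
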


 Non-parallelizable case is more complicated, we give examples of complex nilmanifods  $M=\Gamma\backslash G$ with the trivial line bundles $E$ such that 
 \[Kur_{(M,E)}\not\cong Kur_{M}\times Kur_{E}.
\]
Such examples are nilmanifolds with abelian complex structures.
Thus, on nilmanifolds with abelian complex structures, we do not have a splitting  
\[Kur_{(M,E)}\not\cong Kur_{M}\times Kur_{E}.
\]
in general.

\begin{thm}

Let  $M=\Gamma\backslash G$ be a complex  nillmanifold equipped with an abelian complex structure  and $E$ be a trivial holomorphic  vector bundle.
Then if any small deformation of the complex structure is also abelian, then
we have
\[Kur_{(M,E)}\cong Kur_{M}\times Kur_{E}.
\]
\end{thm}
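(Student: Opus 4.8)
The plan is to reduce everything to the finite-dimensional left-invariant models, to isolate the two special features of abelian complex structures (that $\bar\partial$ kills left-invariant $(0,\bullet)$-forms, and that being abelian is a \emph{linear} condition on the Beltrami differential), and then to run the Kuranishi recursion componentwise. First I would recall the comparison theorem for Dolbeault cohomology of nilmanifolds carrying abelian (in particular nilpotent) complex structures: the inclusion of the subcomplex of left-invariant forms into the full Dolbeault complex is a quasi-isomorphism with coefficients in $\mathcal{O}_M$, in $T^{1,0}M$, and hence in $\mathrm{End}(E)=\mathcal{O}_M^{\oplus r^{2}}$ for the trivial rank-$r$ bundle $E$. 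Because $E$ is trivial its Atiyah class vanishes, so the DGLA governing deformations of $(M,E)$ is the semidirect product $L^\bullet=L^\bullet_M\ltimes L^\bullet_E$ with $L^\bullet_M=A^{0,\bullet}(M,T^{1,0}M)$, $L^\bullet_E=A^{0,\bullet}(M,\mathrm{End}(E))$, and crucially the differential of $L^\bullet$ is the direct sum $\bar\partial_M\oplus\bar\partial_E$: the mixed interaction $\phi\mapsto \mathcal L_\phi$ sits entirely in the bracket. Hence the inclusion of the left-invariant subDGLA $L^\bullet_{\mathrm{inv}}=\bigwedge^{0,\bullet}\g^*\otimes(\g^{1,0}\oplus\mathfrak{gl}_r)$ is a DGLA quasi-isomorphism, and all three Kuranishi spaces may be computed from the corresponding left-invariant finite-dimensional DGLAs.

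Next I would exploit abelianness on the left-invariant level. For an abelian complex structure $\bar\partial=0$ on $\bigwedge^{0,\bullet}\g^*$, so $L^\bullet_{E,\mathrm{inv}}=\bigwedge^{0,\bullet}\g^*\otimes\mathfrak{gl}_r$ is a DGLA with zero differential; in particular $L^\bullet_{\mathrm{inv}}$ has differential splitting as a direct sum of complexes. A left-invariant deformation with Beltrami differential $\phi$ is again abelian if and only if $[\bar X,\phi\bar Y]=[\bar Y,\phi\bar X]$ for all $\bar X,\bar Y\in\g^{0,1}$ — a linear condition cutting out a subspace $V_{\mathrm{ab}}$ of $A^{0,1}(M,T^{1,0}M)_{\mathrm{inv}}$, as one sees by expanding $[\bar X+\phi\bar X,\bar Y+\phi\bar Y]$ and using that $\g^{1,0}$ and $\g^{0,1}$ are abelian. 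The hypothesis says exactly that the Kuranishi family of complex structures of $M$ consists of abelian ones; since "abelian'' is invariant under the left-invariant gauge action, every Maurer--Cartan Beltrami differential occurring in this family lies in $V_{\mathrm{ab}}$. The key computation is then that \emph{for $\phi\in V_{\mathrm{ab}}$ the operator $[\phi,-]$ annihilates $L^\bullet_{E,\mathrm{inv}}$}: for $\psi=\beta\otimes A$ one has $[\phi,\psi]=(\mathcal L_\phi\beta)\otimes A=(\iota_\phi\partial\beta)\otimes A$ (using $\bar\partial\beta=0$ and $\iota_\phi\beta=0$), and evaluating $\iota_\phi\partial\beta$ on antiholomorphic vectors produces only terms of the shape $\beta([\phi\bar U,\bar V]^{0,1},\dots)$, which cancel in pairs under antisymmetrization precisely because $[\phi\bar U,\bar V]=[\phi\bar V,\bar U]$.

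Finally I would run the Kuranishi construction for $L^\bullet_{\mathrm{inv}}=L^\bullet_{M,\mathrm{inv}}\oplus L^\bullet_{E,\mathrm{inv}}$ using a splitting of the complex that is the direct sum of splittings of the two summands. The Kuranishi recursion for the $L_M$-component closes up on itself and produces the Kuranishi family $\hat\phi(x_M)$ of $M$; by naturality with respect to the projection $L^\bullet_{\mathrm{inv}}\to L^\bullet_{M,\mathrm{inv}}$ the $H^1(L_M)$-component of the obstruction map is $\kappa_M(x_M)$, so on $Kur_{(M,E)}$ one has $x_M\in Kur_M$, whence $\hat\phi(x_M)$ is a genuine Maurer--Cartan element, hence abelian, hence in $V_{\mathrm{ab}}$. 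On that locus the mixed bracket drops out of both the recursion and the obstruction for the $L_E$-component, so the $L_E$-recursion reduces to that of $L^\bullet_{E,\mathrm{inv}}$ and the obstruction map becomes $(x_M,x_E)\mapsto(\kappa_M(x_M),\kappa_E(x_E))$, whose zero locus is $Kur_M\times Kur_E$. The hard parts are the arguments in the second and third paragraphs: checking the combinatorial identity $[\phi,-]|_{L_E}=0$ on $V_{\mathrm{ab}}$ in every form-degree, and arguing that "all small deformations abelian'' is strong enough to force the \emph{entire} Kuranishi Beltrami differential (not merely its linear term) into $V_{\mathrm{ab}}$; a secondary subtlety is that $L^\bullet_{E,\mathrm{inv}}$ has nonzero $H^0$, so the conclusion should be stated at the level of Kuranishi spaces as germs rather than as an equivalence of deformation functors, where the residual automorphisms of $M$ acting on $Kur_E$ would otherwise have to be dealt with.
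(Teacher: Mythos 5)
Your overall route is the same as the paper's: pass to the left-invariant sub-DGLA $\bigwedge (\g^{0,1})^{\ast}\otimes \g^{1,0}\oplus \bigwedge (\g^{0,1})^{\ast}\otimes \frak{gl}_{r}(\C)$ via the Dolbeault comparison quasi-isomorphism, note that for the trivial (flat) bundle the differential splits while the bracket retains the mixed term $\iota_{\varphi}D\beta=\iota_{\varphi}\partial\beta$, show that abelianness kills this mixed term, and then run the Kuranishi recursion componentwise. Your key computation --- that $[\varphi,-]$ annihilates $\bigwedge(\g^{0,1})^{\ast}\otimes\frak{gl}_{r}(\C)$ whenever $\varphi$ satisfies the linear condition $[\bar X,\varphi\bar Y]=[\bar Y,\varphi\bar X]$, because $\iota_{\varphi}\partial\beta$ cancels under antisymmetrization --- is correct in every form degree (using that $\iota_{\varphi}$ is a derivation vanishing on $(\g^{0,1})^{\ast}$), and it is the hands-on version of the condition the paper imports from \cite{CFP}. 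Your observation that $\bar\partial=0$ on $\bigwedge(\g^{0,1})^{\ast}$ for an abelian structure is also right, and is in fact cleaner than the corresponding line in the paper.

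The genuine gap is the one you flag yourself and do not close: you need the \emph{entire} Kuranishi Beltrami differential, not merely its linear term, to interact trivially with the $\mathrm{End}(E)$-factor, and your third paragraph is circular as written. The recursion $x_{k}=-\frac{1}{2}\sum_{i+j=k}\delta[x_{i},x_{j}]$ involves the individual homogeneous pieces $\iota_{\varphi_{i}}\partial\psi_{j}$; knowing only that the sum $\sum\varphi_{i}$ lies in $V_{\mathrm{ab}}$ on the locus $Kur_{M}\subset H^{1}$ does not let you kill these terms degree by degree, since you cannot separate homogeneous components of an identity holding only on a subvariety. The paper closes exactly this point with \cite[Theorem 4, Proposition 1]{CFP}: for an abelian complex structure, a Kuranishi series defines an abelian deformation if and only if it \emph{terminates at its linear term}, $\sum\varphi_{i}=\varphi_{1}$, with $\varphi_{1}$ satisfying the contraction condition. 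Under the hypothesis that all small deformations are abelian, the $T^{1,0}$-component of the joint Kuranishi series is therefore just $\varphi_{1}$, every mixed bracket in the recursion and in the obstruction map vanishes identically, and the $\mathrm{End}(E)$-recursion coincides with that of $\bigwedge(\g^{0,1})^{\ast}\otimes\frak{gl}_{r}(\C)$, giving the product of germs. You should import (or prove) this termination statement rather than leaving it as a ``hard part''; with it, the rest of your argument goes through.
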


\section{Kuranishi spaces of differential graded Lie algebras}
A differential graded Lie algebra (shortly DGLA) $(L^{\ast}, d,[,])$ is a $\N$-graded vector space $L^{\ast}$ equipped with a differential $d$ and a graded Lie bracket  $[,]$ satisfying the Leibniz rule.
An analytic DGLA is a normed DGLA  $(L^{\ast}, d,[,])$ whose cohomology $H^{i}(L^{\ast})$ is finite-dimensional for every $i\in \N$.

The Kuranishi space $Kur_{L}$ of  $(L^{\ast}, d,[,])$ is an analytic germ of the analytic space $K_{L}$ in $H^{1}(L^{\ast})$ at $0$ defined by the following way.
Take a Hodge decomposition $L^{i}={\mathcal H}^{i}\oplus dL^{i-1}\oplus A^{i}$ with $H^{i}(L^{\ast})\cong {\mathcal H}^{i}$.
Define the map $\delta: L^{i}\to L^{i-1}$ by the extension of  the inverse of $d : A^{i-1}\to  dL^{i-1}$ associated with the projection $L^{i}\to dL^{i-1}$ and the inclusion $A^{i-1}\subset L^{i}$.
Define  the map $F: L^1\ni x\mapsto x+\frac{1}{2}\delta [x,x]\in L^1$.
Then, the inverse of $F$ is defined on a small neighbourhood $U$ of ${\mathcal H}^{1}$ and define 
\[K_{L}=\{x\in U\vert H([F^{-1}(x), F^{-1}(x)])=0\}
\]
where $H: L^{1}\to {\mathcal H}^{1}$ is the projection.

$F^{-1}(x)$ can be written in as the Kuranishi series $F^{-1}(x)=\sum x_{i}$ such that $x_{1}=x$ and
\[x_{k}=-\frac{1}{2}\sum_{i+j=k} \delta [x_{i},x_{j}]
\]
for $k\ge 2$.
Considering each $x_{i}$ as a homogenous polynomial of $x$, 
\[ \sum H([x_{i}, x_{j}])=0
\]
are defining equations of $K_{L}$.

For a DGLA homomorphism $\phi:L^{\ast}_{1}\to L_{2}^{\ast}$ between analytic DGLAs, if $\phi$ induces isomorphisms on $0$-th and first cohomology and an injection on the second cohomology, then we have an isomorphism $Kur_{L_{1}}\cong Kur_{L_{2}}$.
We say that a morphism $\phi:L^{\ast}_{1}\to L_{2}^{\ast}$ is a quasi-isomorphism if $\phi$ induces a cohomology isomorphism.
If there is a quasi-isomorphism $\phi:L^{\ast}_{1}\to L_{2}^{\ast}$, then we have an isomorphism $Kur_{L_{1}}\cong Kur_{L_{2}}$.

\section{Kuranishi spaces of joint deformations}

Let $M$ be a compact complex manifold and $E$ a holomorphic vector bundle over $M$ equipped with a Hermitian metric $h$.
We know that the graded vector space $A^{0,\ast}(M, T^{1,0}M)$ equipped with the Dolbeault differential $\bar\partial_{T^{1,0}}$ and the  Schouten--Nijenhuis bracket $[]_{SN}$ is a DGLA which governs a deformation theory of the complex structure on $M$.
Kuranishi proves that  the Kuranishi space $Kur_{X}$ of the DGLA  $A^{0,\ast}(M, T^{1,0}M)$  is  isomorphic to a parameter space of a complete deformation family  of a complex manifold $M$ (see \cite{Ku} and \cite{GM2}).
We also know that graded vector space $A^{0,\ast}(M, End(E))$ equipped with the Dolbeault differential $\bar\partial_{End(E)}$ and the   bracket $[]_{End(E)} $ induced by the wedge product on differential forms  and the Lie bracket on  $End(E)$ is a DGLA which governs a deformation theory of the holomorphic  structure on $E$.
As similar to $Kur_{X}$,  the Kuranishi space $Kur_{E}$ of the DGLA  $A^{0,\ast}(M, End(E))$  is  isomorphic to a parameter space of a complete deformation family  of a complex manifold $M$ (see \cite{Ko} and \cite{GM2}).

Let  $D$ be the Chern connection associated with $h$ on the holomorphic vector bundle $E$.
Denote by $R\in A^{1,1}(M, End(E)) $ the curvature of $D$.
Consider the graded vector space
\[L^{\ast}(M ,E)=A^{0,\ast}(M,T^{1,0}M)\oplus A^{0,\ast}(M,  End(E)) .
\] 
Define the operator $\bar \partial_{R}: L^{\ast}(M ,E)\to L^{\ast+1}(M, E) $ by 
\[\bar \partial_{R}(\alpha , \beta)=(\bar\partial_{T^{1,0}M}\alpha , \bar\partial_{End(E)}\beta-\iota_{\alpha}R) 
\]
and the bilinear map $[,]_{D}: L^{\ast}(M ,E)\times L^{\ast}(M ,E) \to L^{\ast}(M, E) $ by 
\[\left[(\alpha_{1}, \beta_{1}), (\alpha_{2},\beta_{2})\right]_{D}=\left([\alpha_{1},\alpha_{2}]_{SN}, [\beta_{1},\beta_{2}]_{End(E)}+\iota_{\alpha_{1}}D\beta_{2}-(-1)^{pq}\iota_{\alpha_{2}}D\beta_{1}\right).
\]
Then $(L^{\ast}(M ,E), \bar \partial_{R}, [,]_{D})$ is a DGLA.
In \cite{Hu},  Huang proves that  the Kuranishi space $Kur_{(M,E)}$ of  the DGLA  $L^{\ast}(M ,E)$   is  isomorphic to a parameter space of a complete deformation family  of a pair  $(M, E)$ of  a complex manifold $M$ and a holomorphic vector bundle $E$.

\begin{prop}\label{Kah}
Let $M$ be a compact K\"ahler manifold and $E$ be a flat Hermitian vector bundle.
Then we have
\[Kur_{(M,E)}\cong Kur_{M}\times Kur_{E}
\]
\end{prop}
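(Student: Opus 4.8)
The plan is to compute $Kur_{(M,E)}$ directly from the DGLA $L^{\ast}(M,E)$ using the recursive description of Section~2, and to show that both the Kuranishi series $F^{-1}$ and the Kuranishi equations split into an ``$M$-part'' lying in $A^{0,\ast}(M,T^{1,0}M)$ and an ``$E$-part'' lying in $A^{0,\ast}(M,End(E))$. First I would extract what flatness of $E$ gives. Since $E$ is flat Hermitian, its Chern connection $D$ coincides with the flat unitary connection, so the curvature $R$ vanishes; hence $\bar\partial_{R}=\bar\partial_{T^{1,0}M}\oplus\bar\partial_{End(E)}$ is just the direct-sum differential, and for $\alpha\in A^{0,p}(M,T^{1,0}M)$, $\beta\in A^{0,q}(M,End(E))$ one has $\iota_{\alpha}D\beta=\iota_{\alpha}\partial_{End(E)}\beta$, where $\partial_{End(E)}$ is the $(1,0)$-part of the induced connection, because $\iota_{\alpha}$ kills the $(0,\ast)$-form $\bar\partial_{End(E)}\beta$. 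So the two summands of $L^{\ast}(M,E)$ are coupled only through the ``mixed'' terms $\iota_{\alpha}\partial_{End(E)}\beta$ occurring in $[\,,]_{D}$; in particular $A^{0,\ast}(M,T^{1,0}M)$ is a sub-DGLA of $L^{\ast}(M,E)$ and $A^{0,\ast}(M,End(E))$ is a differential ideal.

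Next I would record the Hodge-theoretic input. Because $M$ is compact K\"ahler and $End(E)=E^{\ast}\otimes E$ is again Hermitian flat, the K\"ahler identities hold for the induced connection, and the curvature term in the Bochner--Kodaira--Nakano identity vanishes, so $\Box_{\bar\partial_{End(E)}}=\Box_{\partial_{End(E)}}$. Consequently every $\bar\partial_{End(E)}$-harmonic $End(E)$-valued form is also $\partial_{End(E)}$-closed, one has $\partial_{End(E)}^{2}=0$ and $\partial_{End(E)}\bar\partial_{End(E)}^{\ast}+\bar\partial_{End(E)}^{\ast}\partial_{End(E)}=0$, and the Green operator commutes with $\partial_{End(E)}$. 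I would work with the product Hodge decomposition of $L^{\ast}(M,E)$ determined by the K\"ahler metric on $M$ and the parallel metric on $E$, so that $\delta=\delta_{M}\oplus\delta_{E}$ with $\delta_{E}=\bar\partial_{End(E)}^{\ast}G_{E}$ and $H=H_{M}\oplus H_{E}$. In particular $\delta_{E}$ sends $\partial_{End(E)}$-closed forms to $\partial_{End(E)}$-closed forms, since $\partial_{End(E)}\delta_{E}\gamma=-\bar\partial_{End(E)}^{\ast}G_{E}\,\partial_{End(E)}\gamma$.

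The core is the following claim, proved by induction on $k$ along the Kuranishi recursion. Write $x=(a,b)\in{\mathcal H}^{1}={\mathcal H}^{1}_{M}\oplus{\mathcal H}^{1}_{E}$ and $F^{-1}(x)=\sum_{i}x_{i}$ with $x_{i}=(\alpha_{i},\beta_{i})$. I claim that for every $i$ the form $\beta_{i}$ is $\partial_{End(E)}$-closed, the $\alpha_{i}$ depend only on $a$ and form the Kuranishi series of $A^{0,\ast}(M,T^{1,0}M)$ based at $a$, and the $\beta_{i}$ depend only on $b$ and form the Kuranishi series of $A^{0,\ast}(M,End(E))$ based at $b$. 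For $k=1$ this holds because $\beta_{1}=b$ is $\bar\partial_{End(E)}$-harmonic, hence $\partial_{End(E)}$-closed. For the inductive step, the $End(E)$-component of $[x_{i},x_{j}]_{D}$ is $[\beta_{i},\beta_{j}]_{End(E)}+\iota_{\alpha_{i}}\partial_{End(E)}\beta_{j}\mp\iota_{\alpha_{j}}\partial_{End(E)}\beta_{i}$; the two mixed terms vanish by the inductive hypothesis $\partial_{End(E)}\beta_{i}=\partial_{End(E)}\beta_{j}=0$, leaving $[\beta_{i},\beta_{j}]_{End(E)}$, which is $\partial_{End(E)}$-closed because $\partial_{End(E)}$ is a derivation of $[\,,]_{End(E)}$ (the connection on $End(E)$ being induced from that on $E$). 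Since $\delta_{E}$ preserves $\partial_{End(E)}$-closedness, $\beta_{k}=-\frac{1}{2}\sum_{i+j=k}\delta_{E}[\beta_{i},\beta_{j}]_{End(E)}$ is $\partial_{End(E)}$-closed and is exactly the $E$-recursion; the $T^{1,0}M$-component of $[x_{i},x_{j}]_{D}$ is simply $[\alpha_{i},\alpha_{j}]_{SN}$, so $\alpha_{k}$ is exactly the $M$-recursion.

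Granting the claim, I would read off the Kuranishi equation $H([F^{-1}(x),F^{-1}(x)]_{D})=0$: its $H_{M}$-component is $H_{M}([F_{M}^{-1}(a),F_{M}^{-1}(a)]_{SN})=0$, the defining equation of $K_{M}$ in $a$, and its $H_{E}$-component, after all mixed terms drop out because every $\beta_{j}$ is $\partial_{End(E)}$-closed, is $H_{E}([F_{E}^{-1}(b),F_{E}^{-1}(b)]_{End(E)})=0$, the defining equation of $K_{E}$ in $b$. Hence $K_{L^{\ast}(M,E)}=K_{M}\times K_{E}$ inside $H^{1}(L^{\ast}(M,E))=H^{0,1}(M,T^{1,0}M)\oplus H^{0,1}(M,End(E))$, which is exactly $Kur_{(M,E)}\cong Kur_{M}\times Kur_{E}$. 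The step I expect to be the main obstacle is making the decoupling claim airtight, namely checking that $\partial_{End(E)}$-closedness is genuinely propagated by both the bracket and the homotopy $\delta_{E}$; this is where the two hypotheses are essential, and one should note they cannot be dropped, since without flatness the term $\iota_{\alpha}R$ in $\bar\partial_{R}$ and the failure of $\Box_{\bar\partial_{End(E)}}=\Box_{\partial_{End(E)}}$ recouple the factors, and without the K\"ahler condition $\delta_{E}$ need not preserve $\partial_{End(E)}$-closedness.
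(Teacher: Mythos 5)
Your argument is correct, but it runs along a different track from the paper's. The paper does not touch the Kuranishi recursion at all: it passes to the sub-DGLA $A^{0,\ast}(M,T^{1,0}M)\oplus \ker\partial_{End(E)}$ of $L^{\ast}(M,E)$, observes (exactly as you do) that the coupling term $\iota_{\alpha}D\beta=\iota_{\alpha}\partial_{End(E)}\beta$ dies on $\partial_{End(E)}$-closed $\beta$, so that this sub-DGLA is a genuine direct sum of the two factor DGLAs, and then invokes the $\partial\bar\partial$-lemma for the flat Hermitian bundle $End(E)$ (as in Goldman--Millson) to see that the inclusions $\ker\partial_{End(E)}\subset A^{0,\ast}(M,End(E))$ and $A^{0,\ast}(M,T^{1,0}M)\oplus\ker\partial_{End(E)}\subset L^{\ast}(M,E)$ are quasi-isomorphisms; the conclusion then follows from the quasi-isomorphism invariance of Kuranishi spaces stated in Section~2. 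Your proof instead fixes the harmonic Hodge decomposition, proves by induction that the recursion propagates $\partial_{End(E)}$-closedness of the $E$-component (using $\Box_{\bar\partial_{End(E)}}=\Box_{\partial_{End(E)}}$, the anticommutation $\partial_{End(E)}\bar\partial_{End(E)}^{\ast}+\bar\partial_{End(E)}^{\ast}\partial_{End(E)}=0$, and $[G,\partial_{End(E)}]=0$), and concludes that the Kuranishi series and equations literally decouple. In effect you are verifying by hand that the Kuranishi data of $L^{\ast}(M,E)$ computed with the harmonic homotopy lands in the paper's sub-DGLA, which lets you bypass the homotopy-invariance theorem at the cost of a longer, choice-dependent computation; the Hodge-theoretic input (every $\bar\partial_{End(E)}$-harmonic form is $\partial_{End(E)}$-closed, and $\delta_{E}$ preserves $\ker\partial_{End(E)}$) is the same content as the paper's appeal to the $\partial\bar\partial$-lemma. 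Your closing remark that both hypotheses are genuinely needed is consistent with the paper, whose Section~5 exhibits non-K\"ahler examples where the splitting fails.
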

\begin{proof}
Since the Chern connection $D$ is flat, we have $\bar \partial_{R}=\bar\partial_{T^{1,0}M}\oplus  \bar\partial_{End(E)}$.
Consider the subset $A^{0,\ast}(M,T^{1,0}M)\oplus {\rm ker}\partial_{End(E)}$ in the DGLA $L^{\ast}(M ,E)$.
Then, for $(\alpha , \beta)\in A^{0,\ast}(M,T^{1,0}M)\oplus {\rm ker}\partial_{End(E)}$,   $\iota_{\alpha}D\beta=0$.
Hence, the subset $A^{0,\ast}(M,T^{1,0}M)\oplus {\rm ker}\partial_{End(E)}$ is a sub-DGLA in  the DGLA $L^{\ast}(M ,E)$
such that $A^{0,\ast}(M,T^{1,0}M)\oplus {\rm ker}\partial_{End(E)}$ is a direct sum of the two DGLAs $A^{0,\ast}(M,T^{1,0}M)$ and ${\rm ker}\partial_{End(E)}$.

As \cite{GM}, by the  $\partial\bar\partial$-Lermma \cite{DGMS} on flat Hermitian vector bundle ${\rm End}(E)$, the inclusion
\[
{\rm ker}\partial_{End(E)}\subset  A^{0, \ast}(M, {\rm End}(E)) 
\]
is a quasi-isomorphism and hence $Kur_{E}$ is isomorphic to the Kuranishi space of the DGLA ${\rm ker}\partial_{End(E)}$.
By this, the inclusion 
\[
A^{0,\ast}(M,T^{1,0}M)\oplus {\rm ker}\partial_{End(E)}\subset  L^{\ast}(M ,E)
\]
is also a quasi-isomorphism.
Thus the Kuranishi space $Kur_{(M,E)} $ is isomorphic to the Kuranishi space of $A^{0,\ast}(M,T^{1,0}M)\oplus {\rm ker}\partial_{End(E)}$.
Hence we have 
\[Kur_{(M,E)}\cong Kur_{M}\times Kur_{E}.
\]

\end{proof}
\begin{rem}
By more carful arguments,  the  similar statement  for deformations of  pairs of compact K\"ahler manifolds and polystable Higgs bundles with vanishing Chern classes  is proved in \cite{On}.

\end{rem}

\begin{cor}
Let $M$ be a compact K\"ahler manifold and $E$ be a flat Hermitian vector bundle.
Assume that the canonical line bundle of $M$ is holomorphically  trivial. 
Then $Kur_{(M,E)}$ is  cut out by polynomial equations of degree at most $2$.
\end{cor}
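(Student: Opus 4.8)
The plan is to read off the degree bound directly from the splitting of Proposition \ref{Kah}, by bounding the defining equations of each of the two factors. By Proposition \ref{Kah} we have $Kur_{(M,E)}\cong Kur_M\times Kur_E$; concretely, its proof presents $Kur_{(M,E)}$ as the Kuranishi space of the direct-sum DGLA $A^{0,\ast}(M,T^{1,0}M)\oplus\ker\partial_{\mathrm{End}(E)}$, whose first cohomology is $H^1(M,T^{1,0}M)\oplus H^1(M,\mathrm{End}(E))$. For a direct sum of DGLAs the data $\delta$, $F$, $F^{-1}$ and the projection $H$ decompose blockwise, so the defining equations of the Kuranishi space are the union of those of $Kur_M$ (in the first block of variables) and of $Kur_E$ (in the second block). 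Hence it suffices to bound the degrees of the defining equations of $Kur_M$ and of $Kur_E$ separately.

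For $Kur_E$: since $E$ is flat Hermitian, so is $\mathrm{End}(E)$, and the $\partial\bar\partial$-lemma of \cite{DGMS} holds for $\mathrm{End}(E)$-valued forms. As in the proof of Proposition \ref{Kah}, this forces the DGLA $A^{0,\ast}(M,\mathrm{End}(E))$ to be formal, i.e.\ quasi-isomorphic to its cohomology $H^{\ast}_{\bar\partial}(M,\mathrm{End}(E))$ with the induced bracket and zero differential. For a DGLA with zero differential one has $\delta=0$, the map $F$ is the identity, the Kuranishi series collapses to $F^{-1}(x)=x$, and $K_L=\{x\in H^1 : [x,x]=0\}$; thus $Kur_E$ is cut out by the homogeneous quadratic equations $[x,x]_{\mathrm{End}(E)}=0$.

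For $Kur_M$: since $M$ is compact K\"ahler and its canonical bundle is holomorphically trivial, the Bogomolov--Tian--Todorov theorem applies. Concretely, contracting with a nowhere-vanishing holomorphic $n$-form and combining the Tian--Todorov lemma with the $\partial\bar\partial$-lemma shows that the Kuranishi obstruction $H([F^{-1}(x),F^{-1}(x)])$ for $A^{0,\ast}(M,T^{1,0}M)$ vanishes identically, so $K_M$ is a full neighbourhood of $0$ in $H^1(M,T^{1,0}M)$ and $Kur_M$ is smooth, with no defining equations at all.

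Assembling the two factors, $Kur_{(M,E)}\cong Kur_M\times Kur_E$ is cut out by the quadratic equations $[x,x]_{\mathrm{End}(E)}=0$ and nothing else, hence by polynomial equations of degree at most $2$. The proof is an assembly of Proposition \ref{Kah} with two standard structural results---unobstructedness for trivial canonical bundle, and formality for flat coefficients---so there is no serious obstacle; the only point worth checking is that a product of a smooth germ and a quadratic cone is still defined by equations of degree at most $2$, which is clear from the blockwise coordinates above.
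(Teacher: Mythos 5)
Your proposal is correct and follows essentially the same route as the paper: apply Proposition \ref{Kah}, invoke Tian--Todorov for smoothness of $Kur_M$, and use the $\partial\bar\partial$-lemma/formality of $\ker\partial_{End(E)}$ (as in Goldman--Millson) to identify $Kur_E$ with the quadratic cone $\{\eta : [\eta,\eta]=0\}$ in $H^{1}(M,\mathrm{End}(E))$. The only cosmetic difference is that you spell out the blockwise splitting of the Kuranishi data for a direct-sum DGLA, which the paper leaves implicit in the isomorphism $Kur_{(M,E)}\cong Kur_M\times Kur_E$.
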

\begin{proof}
By Proposition  \ref{Kah}, we have $Kur_{(M,E)}\cong Kur_{M}\times Kur_{E}$. 
By Tian-Todorov theorem \cite{Tia, To} (see also \cite{GM2}),   $Kur_{M}$  is smooth.
As shown in \cite{GM}, the quotient map
\[
{\rm ker}\partial_{End(E)}\to H^{0, \ast}_{\partial_{End(E)}} (M, {\rm End}(E)) 
\]
is a quasi-isomorphism.
This implies that  $Kur_{E}$ is isomorphic to
\[\left\{\eta\in H^{0, \ast}_{\partial_{End(E)}} (M, {\rm End}(E)) \vert [\eta,\eta]=0\right\}.
\]
Hence, $Kur_{E}$  is  cut out by polynomial equations of degree at most $2$.
\end{proof}

\section{Nilmanifolds}

Let $G$ be a $n$-dimensional  simply connected Lie group with the Lie algebra $\g$.
A left-invariant complex structure on $G$ can  be identified with 
a  complex structure on   a real Lie algebra $\g$ i.e.  a sub-algebra $\g^{1,0}$ of the complexification $\g_{\C}$ such that 
$\g_{\C}=\g^{1,0}\oplus \g^{0,1}$ where  $\g^{0,1}=\overline{\g^{1,0}}$.
Let $p^{1,0} : \g_{\C}\to \g^{1,0}$ be the projection.
Consider $\g^{1,0}$ as a $\g^{0,1}$-module via $p^{1,0}([X, Y])$ for $X\in \g^{0,1}$, $Y\in \g^{1,0}$.
Then the cochain complex $\bigwedge (\g^{0,1})^{\ast}\otimes \g^{1,0}$ equipped with  the   bracket  induced by the wedge product and the Lie bracket on $\g^{1,0}$ is a DGLA.

Assume $G$ has a lattice $\Gamma$.
Consider the complex nilmanifold $M=\Gamma\backslash G$ with the complex structure induced by a left-invariant complex structure  on $G$.
We have the inclusions
\[\bigwedge (\g^{0,1})^{\ast}\otimes \bigwedge ^{p}(\g^{1,0})^\ast\subset A^{p,\ast}(M)
\]
 \[\bigwedge (\g^{0,1})^{\ast}\otimes \g^{1,0}\subset A^{0,\ast}(M,T^{1,0}M).\]
We say  that a left-invariant complex structure is of Calabi-Yau type  if $\bigwedge^{n} \g^{1,0\ast}$ is a trivial $\g^{0,1}$-module.
Assume that a left-invariant complex structure on $G$  is of Calabi-Yau type.
If the inclusion $\bigwedge (\g^{0,1})^{\ast}\otimes  \bigwedge ^{p}\g^{1,0\ast}\subset A^{p, \ast}(M)$ induces a cohomology isomorphism for any integer $p$, then the inclusion $\bigwedge (\g^{0,1})^{\ast}\otimes \g^{1,0}\subset A^{0,\ast}(M,T^{1,0}M)$ also induces a cohomology isomorphism (see \cite{RO}).

Assume that $G$ is nilpotent.
In this case any left-invariant complex structure on $G$  is of Calabi-Yau type (see \cite{CG}).
We call   $M=\Gamma\backslash G$ a complex nilmanifold.
As an analogous of  Nomizu's theorem (\cite{Nom}) for the de Rham cohomology of nilmanifolds,   the inclusion $\bigwedge (\g^{0,1})^{\ast}\otimes  \bigwedge ^{p}\g^{1,0\ast}\subset A^{p, \ast}(M)$ induces a cohomology isomorphism for any integer $p$  if $M$  has the structure of an iterated principal holomorphic torus bundle (\cite{CF, ROc}).
If  $(G,J)$ is a complex Lie group or  $\g^{1,0}$ is abelian, then  $M$  has the structure of an iterated principal holomorphic torus bundle.

A complex nilmanifold $M=\Gamma\backslash G$ is complex parallelizable if  and only if  $(G,J)$ is a complex Lie group.
The complex structure on a complex nilmanifold $M=\Gamma\backslash G$ is called abelian if $\g^{1,0}$ is abelian.

\begin{thm}\label{par}
Let $M=\Gamma\backslash G$ be a complex parallelizable nillmanifold   and $E$ be a trivial holomorphic  vector bundle.
Then we have
\[Kur_{(M,E)}\cong Kur_{M}\times Kur_{E}.
\]
\end{thm}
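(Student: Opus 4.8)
The plan is to replace each of the three DGLAs governing $Kur_{(M,E)}$, $Kur_{M}$ and $Kur_{E}$ by its finite-dimensional model of left-invariant tensors, exploiting the fact that for a complex parallelizable nilmanifold the coupling term $\iota_{\alpha}D\beta$ of the bracket $[,]_{D}$ vanishes on left-invariant elements for structural reasons. Since $E$ is holomorphically trivial, fix a holomorphic trivialization $\mathrm{End}(E)\cong M\times\mathfrak{gl}_{r}(\C)$ and equip $E$ with a flat Hermitian metric; the Hermitian metric is auxiliary and does not affect $Kur_{(M,E)}$, so this is harmless. Then the Chern connection $D$ is the exterior derivative applied componentwise to $\mathfrak{gl}_{r}(\C)$-valued forms, its curvature $R$ vanishes, $\bar\partial_{R}=\bar\partial_{T^{1,0}M}\oplus\bar\partial_{\mathrm{End}(E)}$, and the underlying cochain complex of $L^{\ast}(M,E)$ is the direct sum of the Dolbeault complexes of $T^{1,0}M$ and of $\mathrm{End}(E)$. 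Consider the left-invariant subspace
\[
L^{\ast}_{\Gamma}(M,E)=\bigl(\,\bigwedge(\g^{0,1})^{\ast}\otimes\g^{1,0}\,\bigr)\oplus\bigl(\,\bigwedge(\g^{0,1})^{\ast}\otimes\mathfrak{gl}_{r}(\C)\,\bigr)\subset L^{\ast}(M,E).
\]
As $(G,J)$ is a complex Lie group one has $[\g^{1,0},\g^{0,1}]=0$, hence $d$ sends left-invariant forms of type $(0,q)$ to left-invariant forms of type $(0,q+1)$; in particular $\partial$ annihilates all left-invariant $(0,\ast)$-forms, so for left-invariant $(\alpha,\beta)$ the form $D\beta=\bar\partial_{\mathrm{End}(E)}\beta$ has no $(1,0)$-part and $\iota_{\alpha}D\beta=0$. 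Consequently $L^{\ast}_{\Gamma}(M,E)$ is a sub-DGLA of $L^{\ast}(M,E)$ on which $[,]_{D}$ is the direct-sum bracket, i.e. $L^{\ast}_{\Gamma}(M,E)$ is the direct sum of the DGLAs $\bigwedge(\g^{0,1})^{\ast}\otimes\g^{1,0}$ and $\bigwedge(\g^{0,1})^{\ast}\otimes\mathfrak{gl}_{r}(\C)$.

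Next I would check that the inclusion $L^{\ast}_{\Gamma}(M,E)\hookrightarrow L^{\ast}(M,E)$ is a quasi-isomorphism. Since the differential on $L^{\ast}(M,E)$ is the direct sum $\bar\partial_{T^{1,0}M}\oplus\bar\partial_{\mathrm{End}(E)}$, on cohomology this inclusion is the direct sum of $\bigwedge(\g^{0,1})^{\ast}\otimes\g^{1,0}\subset A^{0,\ast}(M,T^{1,0}M)$ and $\bigwedge(\g^{0,1})^{\ast}\otimes\mathfrak{gl}_{r}(\C)\subset A^{0,\ast}(M,\mathrm{End}(E))$. A complex parallelizable nilmanifold is an iterated principal holomorphic torus bundle, so $\bigwedge(\g^{0,1})^{\ast}\otimes\bigwedge^{p}\g^{1,0\ast}\subset A^{p,\ast}(M)$ induces a cohomology isomorphism for every $p$; the case $p=0$, tensored with the finite-dimensional space $\mathfrak{gl}_{r}(\C)$, gives the second inclusion, and, the complex structure being of Calabi-Yau type, the implication recalled earlier in this section gives the first. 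Hence $Kur_{(M,E)}\cong Kur_{L^{\ast}_{\Gamma}(M,E)}$, and the same two quasi-isomorphisms give $Kur_{M}\cong Kur_{\bigwedge(\g^{0,1})^{\ast}\otimes\g^{1,0}}$ and $Kur_{E}\cong Kur_{\bigwedge(\g^{0,1})^{\ast}\otimes\mathfrak{gl}_{r}(\C)}$. Finally, for a direct sum of DGLAs the Hodge decomposition, the map $F$, its inverse $F^{-1}$ and the defining equations of the Kuranishi space all decompose along the two summands, so the Kuranishi space of a direct sum is the direct product of the Kuranishi spaces; combining the three isomorphisms gives $Kur_{(M,E)}\cong Kur_{M}\times Kur_{E}$.

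The main obstacle is precisely the step that forces the passage to left-invariant tensors. The vanishing $\iota_{\alpha}D\beta=0$ genuinely relies on $(G,J)$ being a complex Lie group, through $[\g^{1,0},\g^{0,1}]=0$; for a general abelian complex structure $\partial$ need not annihilate left-invariant $(0,\ast)$-forms, the coupling term survives, and the two factors no longer split off, which is exactly the phenomenon behind the non-parallelizable counterexamples announced in the introduction. The other point needing care is that the finite-dimensional models actually compute the cohomology of the full Dolbeault DGLAs, which rests on the Nomizu-type theorem for Dolbeault cohomology of iterated principal holomorphic torus bundles recalled earlier; once these two facts are in hand, the remainder is formal.
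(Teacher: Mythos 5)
Your proposal is correct and follows essentially the same route as the paper: pass to the left-invariant sub-DGLA $\bigwedge(\g^{0,1})^{\ast}\otimes\g^{1,0}\oplus\bigwedge(\g^{0,1})^{\ast}\otimes\frak{gl}_{r}(\C)$, observe that $d=\bar\partial$ on left-invariant $(0,\ast)$-forms (because $(G,J)$ is a complex Lie group) kills the coupling term so the sub-DGLA is a direct sum, and conclude via the quasi-isomorphism of the inclusion. You are merely more explicit than the paper about why the coupling term vanishes and why the inclusion is a quasi-isomorphism.
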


\begin{proof}
Take a triviallization    $E\cong M\times \C^{r}$.
The inclusions 
\[\bigwedge (\g^{0,1})^{\ast}\otimes \g^{1,0}\subset A^{0,\ast}(M,T^{1,0}M)\]
and 
\[\bigwedge (\g^{0,1})^{\ast}\otimes \frak{gl}_{r}( \C) \subset A^{0, \ast}(M, {\rm End}(E))  
\] 
are quasi-isomorphisms and hence $Kur_{M}$ and $ Kur_{E}$ are isomorphic to the Kuranishi spaces of $\bigwedge (\g^{0,1})^{\ast}\otimes \g^{1,0}$ and $\bigwedge (\g^{0,1})^{\ast}\otimes \frak{gl}_{r}( \C)$ respectively.

Consider the subset $\bigwedge (\g^{0,1})^{\ast}\otimes \g^{1,0}\oplus  \bigwedge (\g^{0,1})^{\ast}\otimes \frak{gl}_{r}( \C)$
 in the DGLA $L^\ast(M,E)$.
 We have $d=\bar \partial$ on  $\bigwedge (\g^{0,1})^{\ast}$.
 Thus, $\bigwedge (\g^{0,1})^{\ast}\otimes \g^{1,0}\oplus  \bigwedge (\g^{0,1})^{\ast}\otimes \frak{gl}_{r}( \C)$ is a a sub-DGLA in  the DGLA $L^{\ast}(M ,E)$
such that $\bigwedge (\g^{0,1})^{\ast}\otimes \g^{1,0}\oplus  \bigwedge (\g^{0,1})^{\ast}\otimes \frak{gl}_{r}( \C)$  is a direct sum of the two DGLAs $\bigwedge (\g^{0,1})^{\ast}\otimes \g^{1,0}$ and $ \bigwedge (\g^{0,1})^{\ast}\otimes \frak{gl}_{r}( \C)$.
Since the inclusion 
\[\bigwedge (\g^{0,1})^{\ast}\otimes \g^{1,0}\oplus  \bigwedge (\g^{0,1})^{\ast}\otimes \frak{gl}_{r}( \C)\subset L^{\ast}(M ,E)
\]
is a quasi-isomorphism,
 the Kuranishi space $Kur_{(M,E)} $ is isomorphic to the Kuranishi space of the direct sum  $\bigwedge (\g^{0,1})^{\ast}\otimes \g^{1,0}\oplus  \bigwedge (\g^{0,1})^{\ast}\otimes \frak{gl}_{r}( \C)$.
Hence we have 
\[Kur_{(M,E)}\cong Kur_{M}\times Kur_{E}.
\]

\end{proof}

\begin{cor}
Let $M=\Gamma\backslash G$ be a complex parallelizable nillmanifold   and $E$ be a trivial holomorphic  vector bundle.
Assume that the Lie algebra $\g$ of $G$ is $\nu$-step and naturally graded.
Then $Kur_{(M,E)}$ is  cut out by polynomial equations of degree at most $\nu+1$.

\end{cor}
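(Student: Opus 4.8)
The plan is to derive this from Theorem~\ref{par} together with a Carnot weight grading on the two DGLA factors; the single substantial input is a bound on the second Chevalley--Eilenberg cohomology of a naturally graded nilpotent Lie algebra. By Theorem~\ref{par}, $Kur_{(M,E)}\cong Kur_{M}\times Kur_{E}$, and a product of two analytic germs each cut out by polynomial equations of degree $\le d$ is again cut out by polynomial equations of degree $\le d$; so it is enough to bound the defining equations of $Kur_{M}$ and of $Kur_{E}$ separately. As established in the proof of Theorem~\ref{par}, $Kur_{M}$ is the Kuranishi space of $L_{1}=\bigwedge(\g^{0,1})^{\ast}\otimes\g^{1,0}$ and $Kur_{E}$ that of $L_{2}=\bigwedge(\g^{0,1})^{\ast}\otimes\frak{gl}_{r}(\C)$. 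Since $(G,J)$ is a complex Lie group, $\g^{0,1}$ acts trivially on both coefficient spaces, so the differential of $L_{i}$ is $d_{\mathrm{CE}}\otimes\mathrm{id}$ and its cohomology is $H^{\ast}_{\mathrm{CE}}(\g^{0,1})\otimes W$ with $W=\g^{1,0}$, resp.\ $W=\frak{gl}_{r}(\C)$.

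Next I introduce a weight grading. As $\g$ is naturally graded of step $\nu$ and $(G,J)$ is a complex Lie group, $\g^{1,0}$ is a naturally graded complex Lie algebra $\g^{1,0}=\bigoplus_{i=1}^{\nu}\g^{1,0}_{i}$ with $\g^{1,0}_{1}$ generating, and hence $\g^{0,1}=\bigoplus_{i=1}^{\nu}\g^{0,1}_{i}$. Declare $(\g^{0,1}_{i})^{\ast}$ and $\g^{1,0}_{i}$ to have weight $i$ and $\frak{gl}_{r}(\C)$ to have weight $0$; then $d$ and the graded brackets of $L_{1},L_{2}$ are weight-homogeneous, so with a weight-compatible Hermitian metric the Hodge decomposition, the operator $\delta$ and the maps $F,F^{-1}$ are all weight-homogeneous. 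Fixing a weight-homogeneous basis $\{e_{\alpha}\}$ of $H^{1}$ with weights $w_{\alpha}$ and writing $x=\sum t_{\alpha}e_{\alpha}$, an induction on $x_{k}=-\tfrac12\sum_{i+j=k}\delta[x_{i},x_{j}]$ shows that the coefficient of each monomial $t_{\alpha_{1}}\cdots t_{\alpha_{m}}$ in $x_{m}$ lies in the weight-$(w_{\alpha_{1}}+\cdots+w_{\alpha_{m}})$ part of $L^{1}$; consequently the coefficient of each degree-$k$ monomial in the obstruction $\sum_{i+j=k}H([x_{i},x_{j}])$ lies in the weight-$(\sum_{l}w_{\alpha_{l}})$ part of $\mathcal{H}^{2}$, and $\sum_{l}w_{\alpha_{l}}\ge k\,w_{\min}$, where $w_{\min}$ is the smallest weight in $H^{1}$. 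Hence the degree-$k$ defining equation vanishes identically once $k\,w_{\min}$ exceeds the top weight of $\mathcal{H}^{2}$ (and $x_{m}=0$ once $m\,w_{\min}$ exceeds the top weight of $L^{1}$, so only finitely many equations appear).

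It remains to locate the cohomology weights. For Carnot $\g^{0,1}$, $H^{1}_{\mathrm{CE}}(\g^{0,1})=(\g^{0,1}/[\g^{0,1},\g^{0,1}])^{\ast}=(\g^{0,1}_{1})^{\ast}$ is concentrated in weight $1$, so $w_{\min}=1$ for $L_{2}$ and $w_{\min}=2$ for $L_{1}$. The crucial claim is that $H^{2}_{\mathrm{CE}}(\g^{0,1})$ is concentrated in weights $\le\nu+1$. Dually, write $\g^{0,1}\cong\mathrm{Lie}(V)/J$ with $V=\g^{0,1}_{1}$ and $J$ a graded ideal satisfying $\mathrm{Lie}_{>\nu}(V)\subseteq J\subseteq[\mathrm{Lie}(V),\mathrm{Lie}(V)]$; since the free Lie algebra $\mathrm{Lie}(V)$ has $H_{2}=0$, the Hochschild--Serre five-term sequence embeds $H_{2}(\g^{0,1})$ into $J/[\mathrm{Lie}(V),J]$, and the latter is concentrated in degrees $\le\nu+1$ because for $d\ge\nu+2$ one has $J_{d}=\mathrm{Lie}_{d}(V)=[\mathrm{Lie}_{1}(V),\mathrm{Lie}_{d-1}(V)]\subseteq[\mathrm{Lie}(V),J]$. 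Dualizing, $H^{2}_{\mathrm{CE}}(\g^{0,1})$ lives in weights $[2,\nu+1]$.

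Finally: for $L_{2}$, $\mathcal{H}^{2}\cong H^{2}_{\mathrm{CE}}(\g^{0,1})\otimes\frak{gl}_{r}(\C)$ has top weight $\le\nu+1$ and $w_{\min}=1$, so its degree-$k$ equations vanish for $k>\nu+1$; for $L_{1}$, $\mathcal{H}^{2}\cong H^{2}_{\mathrm{CE}}(\g^{0,1})\otimes\g^{1,0}$ has top weight $\le(\nu+1)+\nu=2\nu+1$ and $w_{\min}=2$, so its degree-$k$ equations vanish for $2k>2\nu+1$, i.e.\ for $k\ge\nu+1$. Thus $Kur_{M}$ is cut out by equations of degree $\le\nu$, $Kur_{E}$ by equations of degree $\le\nu+1$, and therefore $Kur_{(M,E)}$ by equations of degree $\le\nu+1$. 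The one genuinely non-formal ingredient, and the step I expect to demand the most care, is the weight bound on $H^{2}_{\mathrm{CE}}(\g^{0,1})$; everything else is bookkeeping with the Carnot grading and the Kuranishi recursion.
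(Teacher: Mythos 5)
Your proposal is correct in outline and in most details, but it takes a much longer road than the paper: after the splitting $Kur_{(M,E)}\cong Kur_{M}\times Kur_{E}$ from Theorem~\ref{par} (which you use exactly as the paper does, including the reduction to the invariant DGLAs $L_{1}$ and $L_{2}$), the paper simply quotes Rollenske \cite{ROP} for the bound $\nu$ on $Kur_{M}$ and \cite{Ka} for the bound $\nu+1$ on $Kur_{E}$, whereas you reprove both bounds from scratch. Your reproof is essentially the mechanism underlying those references: a Carnot weight grading making $d$, the bracket, $\delta$ and hence the Kuranishi recursion weight-homogeneous, combined with the Hopf-formula estimate $H_{2}(\g^{0,1})\hookrightarrow J/[\mathrm{Lie}(V),J]$ placing $H^{2}_{\mathrm{CE}}(\g^{0,1})$ in weights at most $\nu+1$; the final bookkeeping ($w_{\min}=2$ for $L_{1}$ giving degree $\le\nu$, $w_{\min}=1$ for $L_{2}$ giving degree $\le\nu+1$) is correct and recovers exactly the two cited bounds. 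What your version buys is self-containedness and an explanation of where the asymmetric bounds $\nu$ versus $\nu+1$ come from; what it costs is length and one point you should make explicit: the hypothesis is that the real Lie algebra $\g$ is naturally graded, while your argument needs the Carnot grading on the \emph{complex} Lie algebra $\g^{1,0}\cong(\g,J)$, i.e.\ a grading compatible with $J$. Since the lower central series ideals of a complex Lie algebra are $J$-invariant this is harmless and is the reading intended by \cite{ROP}, but as written you silently pass from one to the other, and that is the only genuine gap I see.
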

\begin{proof}
By Theorem \ref{par}, we have $Kur_{(M,E)}\cong Kur_{M}\times Kur_{E}$. 
By \cite{ROP},  $Kur_{M}$  is  cut out by polynomial equations of degree at most $\nu$.
By \cite{Ka}, $Kur_{E}$  is  cut out by polynomial equations of degree at most $\nu+1$.
\end{proof}

\begin{thm}\label{abe}

Let  $M=\Gamma\backslash G$ be a complex  nillmanifold equipped with an abelian complex structure  and $E$ be a trivial holomorphic  vector bundle.
Then if any small deformation of the complex structure is also abelian, then
we have
\[Kur_{(M,E)}\cong Kur_{M}\times Kur_{E}.
\]
\end{thm}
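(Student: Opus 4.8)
The plan is to follow the strategy of Theorem~\ref{par}: pass to a finite-dimensional invariant model, split off the bundle part, and read off the Kuranishi equations. The new feature is that for an abelian complex structure the natural finite-dimensional sub-DGLA of $L^{\ast}(M,E)$ is no longer a direct sum of the two ``factor'' DGLAs, and the heart of the argument will be that abelianity of all nearby complex structures makes the resulting defect term drop out of the Kuranishi equations.

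Fix a trivialization $E\cong M\times\C^{r}$ together with the flat Hermitian metric, so that the curvature $R$ vanishes and ${\rm End}(E)$ is the trivial Hermitian bundle. Since $\g^{1,0}$ is abelian, $M$ is an iterated principal holomorphic torus bundle, so by \cite{CF, ROc} and \cite{RO} the inclusions
\[ \bigwedge(\g^{0,1})^{\ast}\otimes\g^{1,0}\subset A^{0,\ast}(M,T^{1,0}M),\qquad \bigwedge(\g^{0,1})^{\ast}\otimes\frak{gl}_{r}(\C)\subset A^{0,\ast}(M,{\rm End}(E)) \]
are quasi-isomorphisms. Write $\mathcal{K}=\bigwedge(\g^{0,1})^{\ast}\otimes\g^{1,0}$ and $\mathcal{E}=\bigwedge(\g^{0,1})^{\ast}\otimes\frak{gl}_{r}(\C)$, and let $\mathcal{L}=\mathcal{K}\oplus\mathcal{E}\subset L^{\ast}(M,E)$ carry the restricted operations. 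Because $R=0$, because $\bar\partial_{End(E)}$ preserves $\mathcal{E}$, and because $\iota_{\alpha}D\beta=\iota_{\alpha}\partial\beta$ again lies in $\mathcal{E}$ for $\alpha\in\mathcal{K}$ and $\beta\in\mathcal{E}$, the subspace $\mathcal{L}$ is a sub-DGLA; as the differential of $L^{\ast}(M,E)$ is block-diagonal here, this inclusion is a quasi-isomorphism, so $Kur_{(M,E)}\cong Kur_{\mathcal{L}}$, $Kur_{M}\cong Kur_{\mathcal{K}}$ and $Kur_{E}\cong Kur_{\mathcal{E}}$. Unlike in the parallelizable case, $\mathcal{L}$ is \emph{not} a direct sum of DGLAs, because for an abelian complex structure $d$ sends $\bigwedge(\g^{0,1})^{\ast}$ into bidegree $(1,\ast)$, i.e.\ $d=\partial$ there, so the term $\iota_{\alpha}\partial\beta$ is in general nonzero.

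Next I would run the Kuranishi construction for $\mathcal{L}$. Since $d$ maps $\bigwedge(\g^{0,1})^{\ast}$ into bidegree $(1,\ast)$, the operator $\bar\partial$ vanishes on $\bigwedge(\g^{0,1})^{\ast}$, hence the differential of $\mathcal{E}$ is zero; consequently in the Hodge decomposition of $\mathcal{L}$ the $\mathcal{E}$-summand is purely harmonic, the homotopy $\delta$ vanishes on it, and the projection $H$ restricts to the identity there. It follows that $F^{-1}(\alpha,\beta)=(F_{\mathcal{K}}^{-1}(\alpha),\beta)$, where $F_{\mathcal{K}}$ is the Kuranishi map of $\mathcal{K}$, and that the defining equations of $K_{\mathcal{L}}$ split into the equations cutting out $K_{\mathcal{K}}\cong Kur_{M}$ in the variable $\alpha$, together with the single further equation
\[ [\beta,\beta]_{End(E)}+2\,\iota_{\phi}\partial\beta=0\quad\text{in }\mathcal{E}^{2},\qquad \phi:=F_{\mathcal{K}}^{-1}(\alpha). \]
As $\mathcal{E}$ has zero differential, $F_{\mathcal{E}}={\rm id}$ and $K_{\mathcal{E}}=\{\beta:[\beta,\beta]_{End(E)}=0\}$; so it remains to prove that the defect $\iota_{\phi}\partial\beta$ vanishes on $K_{\mathcal{K}}$, which then gives $K_{\mathcal{L}}=K_{\mathcal{K}}\times K_{\mathcal{E}}$ as germs and hence $Kur_{(M,E)}\cong Kur_{M}\times Kur_{E}$.

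For this last point, note that on $K_{\mathcal{K}}$ the element $\phi\in\bigwedge^{1}(\g^{0,1})^{\ast}\otimes\g^{1,0}={\rm Hom}(\g^{0,1},\g^{1,0})$ is a Maurer--Cartan element, so $\g_{\phi}^{1,0}=\{X+\bar\phi X:X\in\g^{1,0}\}$ is a subalgebra of $\g_{\C}$, and by hypothesis it is abelian, i.e.\ $[X+\bar\phi X,\,Y+\bar\phi Y]=0$ for all $X,Y\in\g^{1,0}$. Since $\g^{1,0}$, and hence $\g^{0,1}$, is already abelian, this collapses to the symmetry of the bilinear map $(X,Y)\mapsto[X,\bar\phi Y]$ on $\g^{1,0}$; taking its $\g^{1,0}$-component and expressing everything through the structure constants of $d$ on $\g^{1,0\ast}$ and $\g^{0,1\ast}$, which are complex conjugate to one another since $\g$ is real, one checks after a short computation that this symmetry is exactly the identity $\iota_{\phi}\circ\partial=0$ on $(\g^{0,1})^{\ast}$, whence $\iota_{\phi}\partial\beta=0$ for all $\beta\in\mathcal{E}^{1}$. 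The step I expect to be the genuine obstacle is precisely this translation of the hypothesis ``every small deformation of the complex structure is abelian'' into the structure-constant identity that annihilates the defect term, applied to the full Kuranishi series $\phi=F_{\mathcal{K}}^{-1}(\alpha)$ rather than only to first order in $\alpha$; the remaining ingredients are routine adaptations of Theorem~\ref{par}, once one observes that $\bar\partial$ vanishes on $\bigwedge(\g^{0,1})^{\ast}$.
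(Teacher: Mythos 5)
Your proposal is correct and follows essentially the same route as the paper: reduce via quasi-isomorphism to the invariant sub-DGLA $\bigwedge (\g^{0,1})^{\ast}\otimes \g^{1,0}\oplus \bigwedge (\g^{0,1})^{\ast}\otimes \frak{gl}_{r}(\C)$, observe that the only obstruction to splitting is the contraction term $\iota_{\phi}D\beta$ in the Kuranishi equations, and kill it using the hypothesis that all nearby complex structures remain abelian. The only difference is that where the paper invokes \cite[Theorem 4, Proposition 1]{CFP} to translate that hypothesis into the vanishing $\iota_{\phi}\partial\beta=0$, you re-derive the needed implication directly by expanding $[X+\bar\phi X, Y+\bar\phi Y]=0$ for the deformed subalgebra, which is a legitimate (and self-contained) substitute for that citation.
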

\begin{proof}
By the same argument of the proof of Theorem \ref{par}, $Kur_{X}$ and $ Kur_{E}$ are isomorphic to the Kuranishi spaces of $\bigwedge (\g^{0,1})^{\ast}\otimes \g^{1,0}$ and $\bigwedge (\g^{0,1})^{\ast}\otimes \frak{gl}_{r}( \C)$ respectively.

Consider the subset $\bigwedge (\g^{0,1})^{\ast}\otimes \g^{1,0}\oplus  \bigwedge (\g^{0,1})^{\ast}\otimes \frak{gl}_{r}( \C)$
 in the DGLA $L^\ast(M,E)$.
 Then $\bigwedge (\g^{0,1})^{\ast}\otimes \g^{1,0}\oplus  \bigwedge (\g^{0,1})^{\ast}\otimes \frak{gl}_{r}( \C)$ is a a sub-DGLA in  the DGLA $L^{\ast}(M ,E)$ such that the inclusion 
\[\bigwedge (\g^{0,1})^{\ast}\otimes \g^{1,0}\oplus  \bigwedge (\g^{0,1})^{\ast}\otimes \frak{gl}_{r}( \C)\subset L^{\ast}(M ,E)
\]
is a quasi-isomorphism.
 We have $d=\bar \partial$ on  $\bigwedge (\g^{0,1})^{\ast}$. Thus $\bigwedge (\g^{0,1})^{\ast}\otimes \g^{1,0}\oplus  \bigwedge (\g^{0,1})^{\ast}\otimes \frak{gl}_{r}( \C)$ is not a direct-sum as a DGLA.
 We study Kuranishi spaces of DGLAs $\bigwedge (\g^{0,1})^{\ast}\otimes \g^{1,0}$, $\bigwedge (\g^{0,1})^{\ast}\otimes \frak{gl}_{r}( \C)$ and $\bigwedge (\g^{0,1})^{\ast}\otimes \g^{1,0}\oplus  \bigwedge (\g^{0,1})^{\ast}\otimes \frak{gl}_{r}( \C)$ precisely.

Consider a Kuranishi series $\sum x_{i}$ of the DGLA $\bigwedge (\g^{0,1})^{\ast}\otimes \g^{1,0}\oplus  \bigwedge (\g^{0,1})^{\ast}\otimes \frak{gl}_{r}( \C)$.
Write $x_{i}=\varphi_{i}+\psi_{i}$ with $\varphi_{i}\in \bigwedge (\g^{0,1})^{\ast}\otimes \g^{1,0}$ and $\psi_{i}\in \bigwedge (\g^{0,1})^{\ast}\otimes \frak{gl}_{r}( \C)$.
Then $\sum \varphi_{i}$ is a Kuranishi series of the DGLA $\bigwedge (\g^{0,1})^{\ast}\otimes \g^{1,0}$.
\cite[Theorem 4, Proposition 1]{CFP} says that for an abelian complex structure, a Kuranishi series $\sum \varphi_{i}$ of $\bigwedge (\g^{0,1})^{\ast}\otimes \g^{1,0}$ defines an abelian deformation if and only if $\sum \varphi_{i}=\varphi_{1}$ and 
\[\iota_{\varphi_{1}}\alpha=0
\]
for any $\alpha\in (\g^{0,1})^{\ast}$.
Thus, by the assumption, we have $\sum \varphi_{i}=\varphi_{1}$ and $\left[\varphi_{1}, (\g^{0,1})^{\ast}\otimes \g^{1,0}\right]_{d}=0$.
Thus $\sum \psi_{i}$ is a Kuranishi series of $ \bigwedge (\g^{0,1})^{\ast}\otimes \frak{gl}_{r}( \C)$ and defining equations of 
$Kur_{(M,E)}$ are only the equations of $ Kur_{E}$.
Hence we have 
we have
\[Kur_{(M,E)}\cong Kur_{X}\times Kur_{E}.
\]

\end{proof}

If the Lie algebra $\g$ of $G$ is isomorphic to to one of the following Lie algebras:
\[{\frak n}_{3}=(0,0,0,0,0, 12+34), {\frak n}_{8}=(0,0,0,0,0, 12), {\frak n}_{9}=(0,0,0,0,12, 14+25),
\]
then any left-invariant complex structure on $G$ is abelian (\cite[Theorem 8]{Ug}).
Thus in such case, we have
\[Kur_{(M,E)}\cong Kur_{M}\times Kur_{E}.
\]
by Theorem \ref{abe}.

\section{Non-splitting examples}
We consider real $6$-dimensional nilmanifolds  $M=\Gamma\backslash G$ with abelian complex structures.
Kuranishi spaces $Kur_{M}$ of them are computed in \cite{MPPS}.
\begin{example}
Consider the direct product $G=H_{3}(\R)\times H_{3}(\R)$  of two copies of the $3$-dimensional real Heisenberg group $H_{3}(\R)$.
Then, we have $\g=\langle X_{1}, X_{2}, Y_{1}, Y_{`2}, Z_{1}, Z_{2}\rangle $ such that $[X_{1}, Y_{1}]=Z_{1}, [X_{2}, Y_{2}]=Z_{2}$.
Define
 \[\g^{1,0}=\langle W_{1}=\frac{1}{2}(X_{1}-\sqrt{-1}Y_{1}), W_{2}=\frac{1}{2}(X_{2}-\sqrt{-1}Y_{2}), W_{3}=\frac{1}{2}(Z_{1}-\sqrt{-1}Z_{2})\rangle.\]
Then $\g^{1,0}$ is abelian.
We consider the DGLAs $L_{1}=\bigwedge (\g^{0,1})^{\ast}\otimes \g^{1,0}$, $L_{2}=\bigwedge (\g^{0,1})^{\ast}\otimes \frak{gl}_{1}( \C)$ and $L_{3}=\bigwedge (\g^{0,1})^{\ast}\otimes \g^{1,0}\oplus  \bigwedge (\g^{0,1})^{\ast}\otimes \frak{gl}_{1}( \C)$.
We see that $Kur_{L_{3}}\not \cong Kur_{L_{1}}\times Kur_{L_{2}}$.
We have
 \[
[\overline{W_{1}},W_{1}]=-\frac{1}{2}\sqrt{-1}(W_{3}+\overline{W_{3}}) \qquad{\rm and} \qquad[\overline{W_{2}},W_{2}]=\frac{1}{2}(W_{3}-\overline{W_{3}}).\]
Thus, $H^{1}(L_{1})=\langle \overline{w_{1}}\otimes W_{1}, \overline{w_{2}}\otimes W_{2}, \overline{w_{3}}\otimes W_{3}, 
\overline{w_{1}}\otimes W_{2}+\sqrt{-1}\overline{w_{2}}\otimes W_{1}\rangle$.
A Kuranishi series of $L_{1}$ is given by 
\[t_{1} \overline{w_{1}}\otimes W_{1}+t_{2} \overline{w_{2}}\otimes W_{2}+t_{3}\overline{w_{3}}\otimes W_{3}+t_{4}(\overline{w_{2}}\otimes W_{1}+\sqrt{-1}\overline{w_{1}}\otimes W_{2})-t_{3}t_{4}(\overline{w_{1}}\otimes W_{2}-\sqrt{-1}\overline{w_{2}}\otimes W_{1}).\]
We have $Kur_{L_{1}}\cong \C^4$. 
We have $H^1(L_{2})\cong H^{1}(\g^{0,1})\cong \langle \overline{w_{1}}, \overline{w_{2}}, \overline{w_{3}}\rangle $ and $Kur_{L_{2}}\cong \C^{3}$.
On the other hand, for a 
Kuranishi series
\begin{multline*}
(t_{1} \overline{w_{1}}\otimes W_{1}+t_{2} \overline{w_{2}}\otimes W_{2}+t_{3}\overline{w_{3}}\otimes W_{3}+t_{4}(\overline{w_{2}}\otimes W_{1}+\sqrt{-1}\overline{w_{1}}\otimes W_{2})+t_{3}t_{4}(\overline{w_{2}}\otimes W_{1}-\sqrt{-1}\overline{w_{1}}\otimes W_{2}), \\
 s_{1}\overline{w_{1}}+s_{2}\overline{w_{2}}+s_{3}\overline{w_{3}})
\end{multline*}
 of $L_{3}$,
 we have the defining equation
$ t_{4}s_{3}=0$.
Thus  $Kur_{L_{3}}\not \cong Kur_{L_{1}}\times Kur_{L_{2}}$.
\end{example}

\begin{example}
Consider  $G=H_{3}(\C)$ the $3$-dimensional complex Heisenberg group.
We take a real basis $X_{1}, X_{2}, X_{3}, X_{4}, Z_{1}, Z_{2}$ such that 
$[X_{1}, X_{3}]=-\frac{1}{2}Z_{1}$, $[X_{1},X_{4}]=[X_{2},X_{3}]=-\frac{1}{2}Z_{2}$, $[X_{1}, X_{3}]=\frac{1}{2}Z_{1}$.
Define \[\g^{1,0}=\langle W_{1}=X_{1}-\sqrt{-1}X_{2}, W_{2}=X_{3}+\sqrt{-1}X_{4}, W_{3}=Z_{1}+\sqrt{-1}Z_{2}\rangle.\]
Then $\g^{1,0}$ is abelian.
We consider the DGLAs $L_{1}=\bigwedge (\g^{0,1})^{\ast}\otimes \g^{1,0}$, $L_{2}=\bigwedge (\g^{0,1})^{\ast}\otimes \frak{gl}_{1}( \C)$ and $L_{3}=\bigwedge (\g^{0,1})^{\ast}\otimes \g^{1,0}\oplus  \bigwedge (\g^{0,1})^{\ast}\otimes \frak{gl}_{1}( \C)$.
We see that $Kur_{L_{3}}\not \cong Kur_{L_{1}}\times Kur_{L_{2}}$.
We have \[
[\overline{W_{1}},W_{1}]=-\sqrt{-1}(W_{3}+\overline{W_{3}}),\qquad  [\overline{W_{2}},W_{2}]=-(W_{3}-\overline{W_{3}})
\qquad {\rm and} \qquad [\overline{W_{1}},W_{2}]=-W_{3}.\]
Thus, $H^{1}(L_{1})=\langle \overline{w_{1}}\otimes W_{1}, \overline{w_{2}}\otimes W_{1}, \overline{w_{3}}\otimes W_{1}, 
\overline{w_{1}}\otimes W_{2}, \overline{w_{3}}\otimes W_{2}, \overline{w_{3}}\otimes W_{3}\rangle$.
A Kuranishi series of $L_{1}$ is given by 
\[t_{1} \overline{w_{1}}\otimes W_{1}+t_{2} \overline{w_{2}}\otimes W_{1}+t_{3}\overline{w_{3}}\otimes W_{1}+t_{4}\overline{w_{3}}\otimes W_{2}+t_{5}\overline{w_{3}}\otimes W_{2}+t_{6}\overline{w_{3}}\otimes W_{3}+t_{1}t_{6}\overline{w_{2}}\otimes W_{2}.\]
We have the defining equation $t_{3}=0$ and hence $Kur_{L_{1}}\cong \C^5$. 
We have $H^1(L_{2})\cong H^{1}(\g^{0,1})\cong \langle \overline{w_{1}}, \overline{w_{2}}, \overline{w_{3}}\rangle $ and $Kur_{L_{2}}\cong \C^{3}$.
On the other hand, for a 
Kuranishi series
\begin{multline*}
(t_{1} \overline{w_{1}}\otimes W_{1}+t_{2} \overline{w_{2}}\otimes W_{1}+t_{3}\overline{w_{3}}\otimes W_{1}+t_{4}\overline{w_{3}}\otimes W_{2}+t_{5}\overline{w_{3}}\otimes W_{2}+t_{6}\overline{w_{3}}\otimes W_{3}+t_{1}t_{6}\overline{w_{2}}\otimes W_{2}, \\
 s_{1}\overline{w_{1}}+s_{2}\overline{w_{2}}+s_{3}\overline{w_{3}})
\end{multline*}
 of $L_{3}$,
 we have the defining equation
$ t_{1}s_{3}=0$.
Thus  $Kur_{L_{3}}\not \cong Kur_{L_{1}}\times Kur_{L_{2}}$.
\end{example}

\end{document}